\newtheorem{theorem}{Theorem}[section]
\newtheorem{corollary}[theorem]{Corollary}
\newtheorem{lemma}[theorem]{Lemma}
\theoremstyle{definition}
\newtheorem{definition}[theorem]{Definition}
\newtheorem{remarks}[theorem]{Remark}
\newtheorem{examples}[theorem]{Example}
\numberwithin{equation}{section}
\def\Koniec{\hbox to\hsize{\hfil$\diamond$}}
\def\nt{\noindent}
\newcommand{\Z}{\mathbb{Z}}
\newcommand{\R}{\mathbb{R}}
\def\r{\varrho}
\def\san{\sum_{\alpha\in \Z^d, \alpha\neq 0}}
\begin{document}

\title{ Box-splines orthogonal
projections }

\author{M. Be\'ska, K. Dziedziul}

\maketitle

Abstract. { Let $P$ be orthogonal projection on B-splines of degree
$r-1$ with equally spaced knots.  Sweldens and Piessens proved
that $P(x^r)-x^r$ is Bernoulli polynomial. We generalize Sweldens
ans Piessens's  result for box-splines. It gives the opportunity to
define the seminorm of Sobolev space in terms of the asymptotic
formula for the error in orthogonal projection. In second part we
deal with similar problems in $BV(\R^d)$. It is a modification of
Boschariev's asymptotic formula  for the functions of bounded
variation. 41A15, 41A35, 41A60. Keywords: box spline, Bernoulli
spline, asymptotic formula, orthogonal projection, function of
bounded variation BV, Marcinkiewicz's average. }


\nt

\section{Introduction}

Let $W^k_p(\R^d)$ denote the Sobolev spaces, $1\leq p<\infty$ with
the norm
\[
\|f\|_{k,p}=
\sum_{|\beta|\leq k} \|D^\beta f\|_p,%
\]
 where
\[
D^\beta f={\partial^{|\beta|} f \over
\partial x_1^{\beta_1}\cdots \partial x_d^{\beta_d}},
\quad \beta=(\beta_1,\cdots ,\beta_d), \] \[
\beta!=\beta_1!\cdot...\cdot\beta_d! \quad |\beta|=\beta_1+\cdots
+\beta_d.
\]
and
\[
\|f\|_p
=
\left( \int_{\R^d} |f|^p \right)^{1/p}.
\]
Let $V=\{v_1,v_2,\cdots,v_n\}$ denote a set of not necessarily
distinct, non zero vectors in $\Z^d\setminus \{0\}$, such that
\[
{\rm span}\{V\}=\R^d.
\]
We call such set admissible. The  box spline denoted by
$B_V(\cdot)$ corresponding to  $V$  is defined by requiring that
\begin{equation}\label{1}
 \int_{\R^d} f(x)B_V(x)\, dx=\int_{[0,1]^n}f(Vu)\, du
 \end{equation}
 holds for any continuous function $f$ defined on $\R^d$, see reference
\cite{BHR}. As usual
\[
Vu=u_1v_1+\cdots u_nv_n.
\]
The Fourier transform is given by
\[
{\widehat f}(\xi)
=
\int_{\R^d} f(t)e^{-2\pi i \xi\cdot t}\, dt.
\]
Here and subsequently "$\cdot$" denotes the scalar product in
$\R^d$. From (\ref{1}) by simple calculation we get that
\begin{equation}\label{2}
{\widehat B_V}(x)
=
\prod_{v\in V} g(x\cdot v),
\end{equation}
where
\[
g(t)={ 1-e^{-2\pi i t} \over 2\pi i t}.
\]
  We denote by $\# V$ the cardinality of the set $V$.
For an admissible set $V$ let
\begin{equation}\label{3}
\r_{V}=\max\{\,r\,:\,
                    {\rm span}\{V\setminus W\}=\R^d\,
                    \,\hbox{for all}\quad {W\subset V},  \# W=r,\,\}.
\end{equation}
 This parameter determines the  smoothness of a box splines
\[
B_V(\cdot)\in C^{\r_V-1}(\R^d)\setminus C^{\r_V}(\R^d).
\]
Let us define
\[
S_{L^2}(hV)=\overline{\hbox{span}} \{B_{V}(\cdot/h-\alpha)\, :\,
\alpha\in Z^d\},
\]
where $h>0$ and  the closure is taken in $L^2(\R^d)$. The
orthogonal projection from $L^2(\R^d)$ onto $S_{L^2}(hV)$ is
denoted by $P_h$. Denoting by $\langle \cdot,\cdot\rangle$ the inner product in
$L^2(\R^d)$, the orthogonal projection onto $S_{L^2}(hV)$ can be
written by $(P=P_1)$:
\begin{equation}\label{4}
P_h=\sigma_h \circ P \circ \sigma_{1/h}, \end{equation}
 where
\[
\sigma_h f(x) =f(x/h).
\]
 A family $V\subset Z^d$ is unimodular if for all $W\subset V$ with $\# W=d$
 we have $|\det W|\leq 1$. Set
\[
[]^\beta(x)=x^\beta
\]
and
\[
\gamma \leq \beta \quad {\rm iff} \quad \gamma_j \leq \beta_j,
j=1,\ldots,d.
\]

\section{Box-spline orthogonal projections}

 Let us define
 \begin{equation}\label{5}
L_\beta(x)=P([]^\beta)(x)-x^\beta,\qquad x\in \R^d.
\end{equation}
Note that in the univariate case $L_\beta$ is a Bernoulli spline
for $|\beta|=\r_V+1$, see  \cite{SP2}. In \cite{Dz} it was proved
in a particular case that  $L_\beta$ is linear combination of
Bernoulli  splines. In this section we generalize this results, see Theorem \ref{bernoul}
below. Applying this result we  simplify the asymptotic formula for  orthogonal
projection calculated in \cite{BD5}. Our method in the case of $L^2(\R^d)$
implies Theorem 2.2 of \cite{BD3}.

We know that $L_\beta$ is a periodic piecewise polynomial  and
from Lemma 3.4 in \cite{Dz} we have:

\begin{lemma}
Let $|\beta|\leq  \r_V+1$. Then
\begin{equation}\label{6} L_\beta(x)=\left({1\over
2 \pi i}\right)^{|\beta|} \san D^\beta \widehat{B_V}(\alpha)e^{2
\pi i \alpha\cdot x}.
\end{equation}
The series converges in every point of continuity of $L_\beta$.
\end{lemma}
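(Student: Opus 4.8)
\emph{The plan is} to reduce the statement to the computation of the Fourier coefficients of the periodic function $L_\beta$, carried out on the Fourier side in the sense of tempered distributions.

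\emph{Step 1 (periodicity).} First I would record two structural facts: the space $S_{L^2}(V)$ is invariant under integer shifts, so $P$ commutes with the translations $\tau_\alpha f=f(\cdot-\alpha)$, $\alpha\in\Z^d$; and $P$ reproduces polynomials of degree $\le\r_V$, i.e.\ $L_\gamma\equiv 0$ for $|\gamma|\le\r_V$. Expanding $(\cdot+\alpha)^\beta$ by the multinomial theorem and using $P\tau_{-\alpha}=\tau_{-\alpha}P$ gives $L_\beta(x+\alpha)=\sum_{\gamma\le\beta}\binom{\beta}{\gamma}\alpha^{\beta-\gamma}L_\gamma(x)$. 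For $|\beta|\le\r_V+1$ every $\gamma<\beta$ satisfies $|\gamma|\le\r_V$, so all lower terms vanish and $L_\beta$ is $\Z^d$-periodic. Thus $L_\beta=\sa c_\gamma e^{2\pi i\gamma\cdot x}$ with $c_\gamma=\int_{[0,1]^d}L_\beta(x)e^{-2\pi i\gamma\cdot x}\,dx$, and it remains to identify the $c_\gamma$.

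\emph{Step 2 (vanishing mean).} For $\gamma=0$: the residual $-L_\beta=(I-P)([]^\beta)$ is orthogonal to every $B_V(\cdot-\alpha)$, and folding $\int_{\R^d}L_\beta(x)B_V(x-\alpha)\,dx$ onto $[0,1]^d$ replaces $B_V$ by its periodization $\sum_{m}B_V(\cdot+m)$. This periodization is the constant $1$, because $\widehat{B_V}(n)=\prod_{v}g(n\cdot v)=0$ for $n\in\Z^d\setminus\{0\}$ (some $n\cdot v$ is a nonzero integer, where $g$ vanishes). Hence $c_0=\int_{[0,1]^d}L_\beta=0$, which accounts for the omission of $\alpha=0$ in the series.

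\emph{Step 3 (the nonzero coefficients; main step).} Write $P([]^\beta)=\sa a_\alpha B_V(\cdot-\alpha)$; since $P$ reproduces polynomials of degree $\le\r_V$ and $P([]^\beta)-[]^\beta$ is bounded, the coefficients have leading part $a_\alpha\sim\alpha^\beta$. On the Fourier side $\widehat{L_\beta}=A\,\widehat{B_V}-\widehat{[]^\beta}$, where $A(\xi)=\sa a_\alpha e^{-2\pi i\alpha\cdot\xi}$ and $\widehat{[]^\beta}=\left(\frac{-1}{2\pi i}\right)^{|\beta|}D^\beta\delta_0$ is supported at the origin. By Poisson summation the top-order part of $A$ is $\left(\frac{-1}{2\pi i}\right)^{|\beta|}D^\beta\sum_{k}\delta_k$, so near a nonzero integer $\gamma$ the symbol $A$ equals $\left(\frac{-1}{2\pi i}\right)^{|\beta|}D^\beta\delta_\gamma$ plus strictly lower-order delta-derivatives. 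The decisive input is that $\widehat{B_V}$ vanishes to order $\ge\r_V+1$ at each nonzero integer $\gamma$: the set $V_\gamma=V\cap\gamma^\perp$ lies in a hyperplane, so removing $V\setminus V_\gamma$ destroys spanning, whence $\#(V\setminus V_\gamma)\ge\r_V+1$ by the definition \eqref{3} of $\r_V$, and this is precisely the number of simple factors $g(\gamma\cdot v)$ that vanish. Consequently $D^\mu\widehat{B_V}(\gamma)=0$ for all $|\mu|\le\r_V$. Expanding the products $(D^\delta\delta_\gamma)\widehat{B_V}$ by Leibniz and discarding every term carrying a vanishing derivative of $\widehat{B_V}$, all surviving delta-derivatives cancel (consistent with $\widehat{L_\beta}$ being a pure delta comb) and only the pure $\delta_\gamma$ term from $\delta=\mu=\beta$ remains. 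This gives
\[
c_\gamma=\left(\frac{-1}{2\pi i}\right)^{|\beta|}(-1)^{|\beta|}D^\beta\widehat{B_V}(\gamma)=\left(\frac{1}{2\pi i}\right)^{|\beta|}D^\beta\widehat{B_V}(\gamma),
\]
as claimed. Pointwise convergence at points of continuity is then the classical Fourier-series statement for the piecewise-polynomial periodic function $L_\beta$.

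\emph{The main obstacle} I expect is not the final algebra but making the distributional calculus legitimate for the \emph{extended} projection: one must justify that $P([]^\beta)$ is represented by a polynomially growing box-spline series, pin down the leading coefficient $a_\alpha\sim\alpha^\beta$ (equivalently, that the error $P([]^\beta)-[]^\beta$ is bounded), and control the products of derivatives of Dirac combs against the smooth factor $\widehat{B_V}$. Once the vanishing order of $\widehat{B_V}$ at nonzero integers is established, the identification of $c_\gamma$ is forced.
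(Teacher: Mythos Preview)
The paper does not supply its own proof of this lemma: it is simply quoted as Lemma~3.4 of \cite{Dz}, so there is nothing here to compare your argument against line by line. Your outline is sound and contains the essential ingredients: periodicity of $L_\beta$ via shift-invariance together with polynomial reproduction up to degree $\r_V$; vanishing of the zeroth Fourier coefficient from the orthogonality $(L_\beta,B_V(\cdot-\alpha))=0$ combined with the partition of unity $\sum_m B_V(\cdot+m)\equiv 1$; and identification of the nonzero coefficients through the fact that $\widehat{B_V}$ vanishes to order at least $\r_V+1$ at every nonzero lattice point, which you derive correctly from the definition of $\r_V$.

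The one place worth tightening is Step~3. Rather than inferring the structure of $A$ indirectly from the vague ``$a_\alpha\sim\alpha^\beta$'', compute the coefficients exactly via the dual box spline $\tilde B_V$: since
\[
a_\alpha=\int (z+\alpha)^\beta\,\tilde B_V(z)\,dz
=\sum_{\gamma\le\beta}\binom{\beta}{\gamma}\alpha^{\beta-\gamma}\int z^\gamma\,\tilde B_V(z)\,dz,
\]
the sequence $(a_\alpha)$ is a genuine polynomial in $\alpha$ of degree $|\beta|$ with leading term $\alpha^\beta$ (because $\int\tilde B_V=\widehat{\tilde B_V}(0)=1$). Hence $A$ is literally a finite linear combination of derivatives $D^\mu$ of the Dirac comb with $|\mu|\le|\beta|$, supported on $\Z^d$, and your Leibniz computation near each nonzero $\gamma$ then goes through cleanly: every term carries a factor $D^\nu\widehat{B_V}(\gamma)$ with $|\nu|\le\r_V$ except $\nu=\mu=\beta$, leaving exactly $(1/2\pi i)^{|\beta|}D^\beta\widehat{B_V}(\gamma)\,\delta_\gamma$. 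With this in hand, the distributional ``main obstacle'' you flag largely dissolves.
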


In fact the problem of the convergence appears only for  box
splines  with $\r_V=0$ and on the boundary of that box-splines. By
Theorem \ref{bernoul} and Remark 2.9 we write $L_\beta$ for $|\beta|=1$ as a
linear combination of a Bernoulli  spline $B^1=x-1/2$, where the
Fourier series of $B^1$ converges also in the point of
discontinuity of $B^1$ (i.e $x=0$) to zero.

Define a set  $\Lambda$,
\begin{equation}\label{8}
\Lambda=\{U\subset V: \# U=\r_V+1, {\rm span}\{V\setminus U\}\neq
\R^d \}.
\end{equation}
Let $U\in \Lambda$. If for all $v\in V\setminus U$
\[
v\cdot \alpha=0
\]
we will denote that $\alpha \bot (V\setminus U)$. Note that the
vectors from $V\setminus U$ span a hyperplane in $\R^d$ i.e.
$d-1$-dimensional subspace.
 From definition of the set $\Lambda$ we get that for all $\alpha\neq 0$
 such that $\alpha \bot (V\setminus U)$
\[
v\cdot \alpha\neq 0 \quad {\rm for\quad all} \quad v\in U.
\]

\begin{definition}
Let us define Bernoulli splines \cite{St} for $U\in \Lambda$ by
\begin{equation}\label{9}
B(V,U)(x)=\sum_{\alpha\neq 0, \alpha\bot (V\setminus U)}
\quad\prod_{v\in U} {1\over 2 \pi i\alpha\cdot v} e^{2 \pi i
\alpha\cdot x},
\end{equation}
where $\alpha\in \Z^d$ and $x\in \R^d$.
\end{definition}

\begin{lemma}
 Let $V$ be unimodular and let $\alpha\in \Z^d\setminus
\{0\}$. Let $D^\beta \widehat{B_V}(\alpha)\neq 0$ for given
$|\beta|=\r_V+1$. Then if $\# U_\alpha\geq \r_V+1$ where
\[
U_\alpha=\{v\in V: \alpha\cdot v\neq 0\}
\]
then $\#U_\alpha=\r_V+1$.
\end{lemma}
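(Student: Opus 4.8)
The plan is to differentiate the product formula (\ref{2}) by the multivariate Leibniz rule and then extract a purely combinatorial constraint on $\#U_\alpha$ from the single fact that $g$ vanishes at every nonzero integer. Indeed, from $g(t)=(1-e^{-2\pi i t})/(2\pi i t)$ one reads off immediately that $g(m)=(1-e^{-2\pi i m})/(2\pi i m)=0$ for every $m\in\Z\setminus\{0\}$, since $e^{-2\pi i m}=1$. (For orientation, the representation $g(t)=\int_0^1 e^{-2\pi i ts}\,ds$ gives $g^{(k)}(t)=(-2\pi i)^k\int_0^1 s^k e^{-2\pi i ts}\,ds$, whence $g^{(k)}(0)=(-2\pi i)^k/(k+1)\neq0$; but the implication I want will need only the vanishing $g(m)=0$ at nonzero integers.)

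Next I would expand the derivative. Each factor of (\ref{2}) depends on $x$ only through the linear form $x\cdot v$, so $D^{\gamma}[g(x\cdot v)]=v^{\gamma}g^{(|\gamma|)}(x\cdot v)$ with $v^{\gamma}=\prod_j v_j^{\gamma_j}$, and the Leibniz rule for a product over $V$ yields
\[
D^\beta \widehat{B_V}(x)=\sum_{\sum_{v}\beta^{(v)}=\beta}\frac{\beta!}{\prod_{v\in V}\beta^{(v)}!}\;\prod_{v\in V}v^{\beta^{(v)}}\,g^{(|\beta^{(v)}|)}(x\cdot v),
\]
the outer sum ranging over all decompositions of $\beta$ into multi-indices $\beta^{(v)}$, one per $v\in V$. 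Evaluating at $x=\alpha\in\Z^d$, every $\alpha\cdot v$ is an integer, equal to $0$ for $v\notin U_\alpha$ and to a nonzero integer for $v\in U_\alpha$.

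Now comes the counting step. By hypothesis $D^\beta\widehat{B_V}(\alpha)\neq0$, so at least one term of the sum is nonzero; fix such a decomposition and set $k_v=|\beta^{(v)}|$. In a nonzero term every factor is nonzero, so in particular $g^{(k_v)}(\alpha\cdot v)\neq0$ for each $v\in U_\alpha$; since $\alpha\cdot v$ is then a nonzero integer and $g$ vanishes there, we must have $k_v\geq1$ for every $v\in U_\alpha$. Hence
\[
\#U_\alpha\;\leq\;\sum_{v\in U_\alpha}k_v\;\leq\;\sum_{v\in V}k_v\;=\;|\beta|\;=\;\r_V+1,
\]
and combining this with the assumption $\#U_\alpha\geq\r_V+1$ gives $\#U_\alpha=\r_V+1$.

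The one point deserving care is the Leibniz bookkeeping: one must be sure that a nonzero value of the \emph{entire} sum forces a \emph{single} decomposition all of whose factors are nonzero, which is clear since a sum of zeros is zero. After that the estimate is immediate, and I would note that the bound $\#U_\alpha\leq\r_V+1$ uses neither the non-vanishing of the higher derivatives $g^{(k)}(\alpha\cdot v)$ nor unimodularity of $V$; unimodularity is what the neighbouring results require (for instance to pin down the sets $U\in\Lambda$), rather than this implication.
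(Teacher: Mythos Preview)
Your argument is correct and matches the paper's approach: both exploit that $g$ vanishes at nonzero integers, so every factor $g(\alpha\cdot v)$ with $v\in U_\alpha$ must absorb at least one derivative in any surviving Leibniz term, forcing $\#U_\alpha\leq|\beta|=\r_V+1$. The only cosmetic difference is that the paper first groups the product as $\prod_{v\in U_\alpha}\cdot\prod_{v\notin U_\alpha}$ and applies Leibniz to this two-factor split, whereas you expand over all $v\in V$ at once; the counting is identical.
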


\begin{proof}
 Note that by (\ref{2})
\[
D^\beta \widehat{B_V}(x)=\sum_{\gamma\leq \beta}  {\beta \choose
\gamma} D^\gamma\prod_{v\in U_\alpha} g(x\cdot v) D^{\beta-\gamma}
\prod_{v\in V\setminus U_\alpha} g(x\cdot v)
\]
 Since $\# U_\alpha\geq |\beta|=\r_V+1$ then for $\gamma<\beta$
\begin{equation}\label{10}
D^\gamma \prod_{v\in U_\alpha} g(x\cdot v)\Big|_{x=\alpha}=0.
\end{equation}
since $v\cdot \alpha\neq 0$ for $v\in U_\alpha$ and $g(v\cdot
\alpha)=0$. Since $g(0)=1$, (\ref{10}) shows that
\begin{equation}\label{11}
 D^\beta
\widehat{B_V}(\alpha)= D^\beta \prod_{v\in U_\alpha} g(x\cdot
v)\Big|_{x=\alpha}\neq 0 .
\end{equation}
(\ref{11})  implies theorem.
\end{proof}

\begin{lemma}
 Let $V$ be unimodular and let $\alpha\in \Z^d\setminus
\{0\}$. Let $D^\beta \widehat{B_V}(\alpha)\neq 0$ for given
$|\beta|=\r_V+1$. Then $U_\alpha\in \Lambda$.
\end{lemma}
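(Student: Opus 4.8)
The plan is to verify directly the two defining conditions for membership of $U_\alpha$ in the set $\Lambda$ of (\ref{8}): that $\#U_\alpha=\r_V+1$ and that ${\rm span}\{V\setminus U_\alpha\}\neq\R^d$. I would obtain the span condition first, essentially for free, and then combine it with the definition of $\r_V$ and the previous lemma to pin down the cardinality.

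First I would read off the span condition from the definition of $U_\alpha$. Every vector $v\in V\setminus U_\alpha$ satisfies $\alpha\cdot v=0$, so $V\setminus U_\alpha$ is contained in the orthogonal complement $\alpha^\perp=\{x\in\R^d:\alpha\cdot x=0\}$. Since $\alpha\neq 0$, this complement is a hyperplane of dimension $d-1$, whence ${\rm span}\{V\setminus U_\alpha\}\subseteq\alpha^\perp$ is a proper subspace of $\R^d$. This already establishes the second condition for membership in $\Lambda$, and it uses only $\alpha\neq 0$.

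Next I would deduce the cardinality bound $\#U_\alpha\geq\r_V+1$ from the definition (\ref{3}) of $\r_V$. The definition guarantees full span after deleting any set of exactly $\r_V$ vectors; a short monotonicity remark upgrades this to all sets of size at most $\r_V$, since if $\#W\leq\r_V$ one may enlarge $W$ to a set $W'$ of size $\r_V$ and then $V\setminus W\supseteq V\setminus W'$ forces ${\rm span}\{V\setminus W\}=\R^d$. Taking the contrapositive with $W=U_\alpha$, the failure ${\rm span}\{V\setminus U_\alpha\}\neq\R^d$ proved above forces $\#U_\alpha\geq\r_V+1$. With this inequality established, the previous lemma, whose hypotheses ($V$ unimodular, $\alpha\neq 0$, and $D^\beta\widehat{B_V}(\alpha)\neq 0$ with $|\beta|=\r_V+1$) are exactly the ones assumed here, yields $\#U_\alpha=\r_V+1$. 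Together with the span condition this gives $U_\alpha\in\Lambda$.

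I do not expect a genuine obstacle, since the argument is a short deduction chaining the span inclusion, the definition of $\r_V$, and the previous lemma. The only place calling for a little care is the monotonicity step that converts the definition of $\r_V$, which quantifies over deletion sets of exactly size $\r_V$, into a statement about all deletion sets of size at most $\r_V$, and then the correct reading of its contrapositive. The matching upper bound $\#U_\alpha\leq\r_V+1$ is deliberately delegated to the previous lemma; proved directly it would come from counting derivatives in the Leibniz expansion of $D^\beta\prod_{v\in U_\alpha}g(x\cdot v)$ at $x=\alpha$, where each factor with $v\in U_\alpha$ vanishes unless differentiated, so a nonzero derivative of total order $\r_V+1$ can activate at most $\r_V+1$ of those factors.
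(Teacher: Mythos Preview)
Your proposal is correct and follows essentially the same route as the paper: establish that $V\setminus U_\alpha\subseteq\alpha^\perp$ forces ${\rm span}\{V\setminus U_\alpha\}\neq\R^d$, use the definition of $\r_V$ to rule out $\#U_\alpha\leq\r_V$, and invoke the previous lemma for the equality $\#U_\alpha=\r_V+1$. The paper organizes this as a case split on whether $\#U_\alpha\geq\r_V+1$ rather than proving the span condition once up front, and it omits the monotonicity remark you spell out, but the logical content is the same.
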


\begin{proof}
 Note that $U_\alpha\neq \emptyset$ since $V$ spans $R^d$. If
$\# U_\alpha\geq \r_V+1$ then from Lemma 1.4 we get that $\#
U_\alpha=\r_V+1$. Moreover $\alpha\bot (V\setminus U_\alpha)$,
hence $span\{V\setminus U_\alpha\}\neq R^d$, it follows that
$U_\alpha\in \Lambda$.

Let us assume that $\# U_\alpha< \r_V+1$. But $\alpha\bot
(V\setminus U_\alpha) $, hence $span\{V\setminus U_\alpha\}\neq
R^d$. This is a contradiction with definition of $\r_V$ see
(\ref{3}).
\end{proof}

\begin{theorem}\label{bernoul}
 Let $V$ be unimodular. Let $|\beta|=\r_V+1$. Then $L_\beta$ is a linear combination of
 Bernoulli  splines
  \begin{equation}\label{14}
L_\beta(x)= P([]^\beta)(x)-x^\beta=  \sum_{U\in \Lambda}
C(\beta,U) B(V,U)(x)
\end{equation}
where constants
\[
C(\beta,U)=D^\beta (\prod_{v\in U}(x\cdot v)).
\]

 \end{theorem}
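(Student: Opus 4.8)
The plan is to start from the Fourier-series representation of $L_\beta$ supplied by the first Lemma, namely

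\[
L_\beta(x)=\left({1\over 2\pi i}\right)^{|\beta|}\san D^\beta\widehat{B_V}(\alpha)\,e^{2\pi i\alpha\cdot x},
\]

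and to compute the coefficient $D^\beta\widehat{B_V}(\alpha)$ explicitly for each $\alpha\neq 0$, then reorganize the resulting sum over $\alpha$ into a sum over the sets $U\in\Lambda$. The key structural input is the pair of Lemmas just proved: for unimodular $V$, whenever $D^\beta\widehat{B_V}(\alpha)\neq 0$ with $|\beta|=\r_V+1$, the active set $U_\alpha=\{v\in V:\alpha\cdot v\neq0\}$ lies in $\Lambda$ and satisfies $\#U_\alpha=\r_V+1$. Thus every nonzero term in the series is indexed by some $\alpha$ that is orthogonal to $V\setminus U_\alpha$ for exactly one $U_\alpha\in\Lambda$, which is precisely the indexing appearing in the Bernoulli splines $B(V,U)$ of \eqref{9}.

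First I would fix $\alpha\neq0$ with $D^\beta\widehat{B_V}(\alpha)\neq0$ and set $U=U_\alpha$. By \eqref{11} the coefficient collapses to $D^\beta\prod_{v\in U}g(x\cdot v)\big|_{x=\alpha}$. Writing $g(t)=\frac{1-e^{-2\pi i t}}{2\pi i t}$ and using that $g$ vanishes at each nonzero integer multiple while $\alpha\cdot v\in\Z\setminus\{0\}$ for $v\in U$ (unimodularity guarantees these are integers), I would argue that among all the ways the Leibniz rule distributes the $|\beta|=\#U$ derivatives across the $\#U$ factors, only the term that differentiates each factor exactly once survives: any factor left undifferentiated contributes $g(\alpha\cdot v)=0$, and any factor differentiated twice or more forces another factor to go undifferentiated. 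So $D^\beta\prod_{v\in U}g(x\cdot v)\big|_{x=\alpha}$ equals a single symmetrized product of first derivatives $g'(\alpha\cdot v)$, one per $v\in U$. Since $g'(t)\big|_{t=m}=\frac{-1}{2\pi i m}$ for $m\in\Z\setminus\{0\}$, each surviving factor is $-\frac{1}{2\pi i(\alpha\cdot v)}$ up to the multinomial bookkeeping that reassembles into $D^\beta\big(\prod_{v\in U}(x\cdot v)\big)=C(\beta,U)$.

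Next I would carry out this bookkeeping carefully: the factor $\left(\frac{1}{2\pi i}\right)^{|\beta|}$ out front, the product of the $g'$ values each contributing $\frac{1}{2\pi i(\alpha\cdot v)}$, and the combinatorial $C(\beta,U)$ arising from how the partial derivatives $D^\beta$ act on the multilinear form $\prod_{v\in U}(x\cdot v)$, should combine to give exactly $C(\beta,U)\prod_{v\in U}\frac{1}{2\pi i\,\alpha\cdot v}$, matching one term of \eqref{9}. The sign and normalization matching is the delicate part — one must verify that the constant $C(\beta,U)=D^\beta\prod_{v\in U}(x\cdot v)$ (which is a constant because the argument is a homogeneous polynomial of degree $\#U=|\beta|$) is precisely the coefficient produced by the Leibniz expansion, with all factors of $2\pi i$ accounted for.

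Finally I would reorganize the full sum: partition the nonzero $\alpha$'s according to which $U\in\Lambda$ equals $U_\alpha$; by the Lemmas each contributing $\alpha$ satisfies $\alpha\bot(V\setminus U)$, and conversely the condition $\alpha\bot(V\setminus U)$ with $\alpha\neq0$ forces $\alpha\cdot v\neq0$ for $v\in U$, so the $\alpha$'s summed in $B(V,U)$ are exactly those with $U_\alpha=U$. Summing $C(\beta,U)\prod_{v\in U}\frac{1}{2\pi i\,\alpha\cdot v}\,e^{2\pi i\alpha\cdot x}$ over these $\alpha$ reproduces $C(\beta,U)B(V,U)(x)$, and summing over $U\in\Lambda$ yields \eqref{14}. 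The main obstacle I anticipate is the Leibniz/multinomial computation in the middle step: showing that only the "one derivative per factor" term survives and that its coefficient reassembles exactly into $C(\beta,U)$ with the right power of $2\pi i$, rather than merely up to an unidentified constant. A secondary subtlety is confirming that distinct $U\in\Lambda$ give disjoint index sets of $\alpha$'s (so there is no double counting), which should follow because $U_\alpha$ is uniquely determined by $\alpha$.
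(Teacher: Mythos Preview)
Your plan is essentially the paper's own proof: start from the Fourier expansion of Lemma~2.1, use Lemmas~2.4--2.5 to restrict the contributing $\alpha$'s to those with $U_\alpha\in\Lambda$, evaluate $D^\beta\widehat{B_V}(\alpha)$ by the ``one derivative per factor'' Leibniz argument, and regroup by $U$. The paper records the outcome of that Leibniz step directly as
\[
D^\beta\widehat{B_V}(\alpha)=\Bigl(\prod_{v\in U_\alpha}\frac{1}{v\cdot\alpha}\Bigr)\,D^\beta\!\Bigl(\prod_{v\in U_\alpha}(x\cdot v)\Bigr)\Big|_{x=\alpha},
\]
observing that the right-hand side is a constant independent of $\alpha$ (homogeneity of degree $|\beta|$).

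There is one concrete slip in your bookkeeping, precisely at the point you flagged as delicate: for a nonzero integer $m$ one has $g'(m)=1/m$, not $-1/(2\pi i\,m)$. (Near $t=m$ one checks $g(t)=(t-m)/m+O((t-m)^2)$.) Thus the Leibniz evaluation gives $D^\beta\widehat{B_V}(\alpha)=C(\beta,U_\alpha)\prod_{v\in U_\alpha}1/(\alpha\cdot v)$ with \emph{no} factors of $2\pi i$; the factors $1/(2\pi i\,\alpha\cdot v)$ in the definition of $B(V,U)$ come entirely from the prefactor $(1/(2\pi i))^{|\beta|}$ in Lemma~2.1, and there is no sign to chase. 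With this correction the constants match on the nose. A minor side remark: $\alpha\cdot v\in\Z$ already because $\alpha,v\in\Z^d$; unimodularity is not needed for that.
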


\begin{proof}
By Lemma 2.1 and Lemma 2.5 we get
\[
L_\beta(x)= \left({1\over 2 \pi i}\right)^{\r_V+1} \sum_{U\in
\Lambda} \quad \sum_{\alpha\neq 0, \alpha\bot (V\setminus U) }
D^\beta \widehat{B_V}(\alpha) e^{2\pi i\alpha\cdot x}.
\]
By (\ref{11}) we get that
 \begin{equation}\label{12}
D^\beta \widehat{B_V}(\alpha)=\prod_{v\in U_\alpha} {1\over v\cdot
\alpha} D^\beta (\prod_{v\in U_\alpha}(x\cdot v))\Big|_{x=\alpha}.
\end{equation}
Note that \[ C(\beta,U_\alpha)=D^\beta (\prod_{v\in
U_\alpha}(x\cdot v))\Big|_{x=\alpha}= D^\beta (\prod_{v\in
U_\alpha}(x\cdot v))
 \]
  Consequently
\[
L_\beta(x)=  \sum_{U\in \Lambda} C(\beta,U) B(V,U)(x)
\]

\end{proof}

Let us recall the results form \cite{BD5} and \cite{Dz}.

\begin{theorem}
  Let $1\leq p< \infty$. Let $V$ be unimodular.
Let $f\in W_p^{\r_V+1} (\R^d)$.  Then
 \begin{equation}\label{7}
 \lim_{h\to 0^+} \Big\| {f-P_h
f\over h^{\r_V+1}} \Big\|_p^p =
\end{equation}
 \[
 =\int_{\R^d} \Big( \int_{[0,1]^d}
\Big|  \sum_{|\beta|=\r_V+1} {1\over \beta!} D^\beta f(t)
L_\beta(x) \Big|^p \, dx \Big) dt. \]
\end{theorem}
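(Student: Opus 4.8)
The plan is to reduce the $L^p$-limit to a frozen-coefficient local computation and then to average out a periodic oscillation. The point of departure is that $P$ reproduces all polynomials of degree at most $\r_V$; equivalently, by the Fourier representation (\ref{6}) together with the unimodularity estimates on $U_\alpha$ above, $L_\beta\equiv 0$ for $|\beta|\le \r_V$, while for $|\beta|=\r_V+1$ the $L_\beta$ are the bounded $\Z^d$-periodic Bernoulli splines furnished by Theorem \ref{bernoul}. A direct computation from the conjugation (\ref{4}) and the definition (\ref{5}) gives, for each monomial with $|\beta|=\r_V+1$,
\[
x^\beta-P_h([\,]^\beta)(x)=-h^{\r_V+1}L_\beta(x/h).
\]
Because the part of degree at most $\r_V$ of any Taylor polynomial is reproduced exactly by $P_h$, expanding $f$ about the evaluation point $x$ to order $\r_V+1$ then yields the pointwise ansatz
\[
\frac{(f-P_hf)(x)}{h^{\r_V+1}}=-\sum_{|\beta|=\r_V+1}\frac{D^\beta f(x)}{\beta!}\,L_\beta(x/h)+R_h(x),
\]
the sign being irrelevant since everything below enters through $|\cdot|^p$.

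First I would prove this ansatz with $\|R_h\|_p\to 0$ for $f\in C_c^\infty(\R^d)$. The difficulty is that $P_h$, being an orthogonal projection, is nonlocal; the remedy is its quasi-locality, i.e. the rapid decay of the dual (coefficient) functionals of the box-spline space, so that $(P_hf)(x)$ is governed by $f$ on an $O(h)$-neighbourhood of $x$ up to a negligible tail. Combining this with Taylor's theorem (the remainder beyond order $\r_V+1$ being controlled by the modulus of continuity of the top derivatives $D^\beta f$) gives $\|R_h\|_p\to0$. At the same stage I would invoke, from \cite{BD5} and \cite{Dz}, the companion approximation estimate $\|f-P_hf\|_p\le C\,h^{\r_V+1}\|f\|_{\r_V+1,p}$, which says precisely that the operators $Q_hf:=(f-P_hf)/h^{\r_V+1}$ are uniformly bounded on $L^p$.

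The next step is the averaging (homogenisation) argument. Writing $c_\beta=\tfrac1{\beta!}D^\beta f$ and
\[
\Phi(x,u)=\Big|\sum_{|\beta|=\r_V+1}c_\beta(x)\,L_\beta(u)\Big|^{p},
\]
which is $\Z^d$-periodic and bounded in $u$, the key lemma is the standard one: for $\Z^d$-periodic bounded $G$ and $\phi\in L^1(\R^d)$ one has $\int\phi(x)G(x/h)\,dx\to\big(\int_{[0,1]^d}G\big)\int\phi$. Freezing the (uniformly continuous, compactly supported) coefficients $c_\beta$ on a fine cube decomposition upgrades this to
\[
\int_{\R^d}\Phi(x,x/h)\,dx\ \longrightarrow\ \int_{\R^d}\int_{[0,1]^d}\Phi(t,u)\,du\,dt .
\]
Since $\|R_h\|_p\to0$, the reverse triangle inequality in $L^p$ shows that $\|Q_hf\|_p^{\,p}$ has the same limit as $\int_{\R^d}\Phi(x,x/h)\,dx$, which is exactly the right-hand side of (\ref{7}). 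Finally I would pass to an arbitrary $f\in W_p^{\r_V+1}(\R^d)$ by density: $C_c^\infty$ is dense, the $Q_h$ are uniformly bounded on $L^p$, and the target functional $f\mapsto\big(\int_{\R^d}\int_{[0,1]^d}\Phi\,du\,dt\big)^{1/p}$ is continuous on $W_p^{\r_V+1}$, being bounded by $\big(\sum_{|\beta|=\r_V+1}\tfrac1{\beta!}\|L_\beta\|_{L^p([0,1]^d)}\big)\|f\|_{\r_V+1,p}$ after two applications of Minkowski's inequality.

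I expect the genuine obstacle to be the remainder control in the local ansatz: one must quantify the quasi-locality of the orthogonal projection $P_h$ sharply enough to obtain $\|R_h\|_p\to0$ uniformly. The borderline case $\r_V=0$, where the $L_\beta$ are discontinuous (as flagged in the remark following (\ref{6})), makes both this estimate and the averaging lemma delicate on the boundaries of the spline supports, and is the reason the whole argument must be run in the $L^p$ norm rather than pointwise.
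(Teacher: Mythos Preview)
The paper does not supply its own proof of this theorem: it is explicitly \emph{recalled} from \cite{BD5} and \cite{Dz} (``Let us recall the results from \cite{BD5} and \cite{Dz}''), so there is no in-paper argument to compare against. Your sketch is a faithful reconstruction of the standard route to such asymptotic error formulas and matches in spirit what those references do: polynomial reproduction up to degree $\r_V$ isolates the leading term $-h^{\r_V+1}\sum_{|\beta|=\r_V+1}\tfrac{1}{\beta!}D^\beta f(x)\,L_\beta(x/h)$; quasi-locality (exponential decay of the dual box-spline basis, see \cite{BHR}) controls the Taylor remainder after the nonlocal projection; a homogenisation lemma converts the fast periodic oscillation $L_\beta(x/h)$ into the unit-cube average; and density of $C_c^\infty$ in $W_p^{\r_V+1}$ closes the argument.

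One small slip worth correcting: the Jackson estimate $\|f-P_hf\|_p\le C\,h^{\r_V+1}\|f\|_{\r_V+1,p}$ gives uniform boundedness of $Q_h=(I-P_h)/h^{\r_V+1}$ as maps $W_p^{\r_V+1}\to L^p$, not ``on $L^p$'' as you wrote. That is exactly what the density step needs, since the limiting functional on the right of (\ref{7}) is also continuous in the $W_p^{\r_V+1}$-norm, but the phrasing should be fixed. Otherwise your identification of the genuine technical obstacle---quantifying quasi-locality sharply enough to get $\|R_h\|_p\to 0$, with extra care in the discontinuous case $\r_V=0$---is accurate and is precisely where the work in \cite{BD5} lies.
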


Now we want to examine the right part of (\ref{7}).

\begin{theorem}
 Let $V$ be unimodular then
\[
\sum_{|\beta|=\r_V+1} L_\beta(x) { D^\beta f(t)\over \beta!}
=
\sum_{U\in \Lambda} D_U f(t) B(V,U)(x),
\]
where
\[
D_U=\prod_{v\in U} D_v
\]
and $D_v$ is the directional derivative.
\end{theorem}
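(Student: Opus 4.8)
The plan is to feed the expansion of $L_\beta$ furnished by Theorem \ref{bernoul} into the left-hand side and then collect terms by $U$. Substituting (\ref{14}) and interchanging the two (finite) sums gives
\[
\sum_{|\beta|=\r_V+1} L_\beta(x)\,\frac{D^\beta f(t)}{\beta!}
=
\sum_{U\in \Lambda} B(V,U)(x)\left(\sum_{|\beta|=\r_V+1}\frac{C(\beta,U)}{\beta!}\,D^\beta f(t)\right),
\]
so that the whole statement reduces to the pointwise identity
\[
\sum_{|\beta|=\r_V+1}\frac{C(\beta,U)}{\beta!}\,D^\beta f(t) = D_U f(t),\qquad U\in\Lambda.
\]

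To prove this identity I would pass to symbols. Since $\#U=\r_V+1$ for every $U\in\Lambda$ by (\ref{8}), the operator $D_U=\prod_{v\in U}(v\cdot \nabla)$ is a constant-coefficient homogeneous differential operator of order $\r_V+1$, whose symbol is the polynomial $Q_U(y)=\prod_{v\in U}(v\cdot y)$ with $y\in\R^d$; that is, $D_U=Q_U(\nabla)$. Because $Q_U$ is homogeneous of degree $\r_V+1=|\beta|$, Taylor's formula at the origin reads
\[
Q_U(y)=\sum_{|\beta|=\r_V+1}\frac{1}{\beta!}\bigl(D^\beta Q_U\bigr)\,y^\beta,
\]
where each coefficient $D^\beta Q_U = C(\beta,U)$ is a constant, being a derivative of maximal order of a polynomial of that exact degree (here I use that the scalar product is symmetric, so $Q_U(y)=\prod_{v\in U}(y\cdot v)$ matches the polynomial in the definition of $C(\beta,U)$). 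Replacing the formal variable $y$ by $\nabla$ and applying the resulting operator to $f$ then yields exactly $D_U f(t)=\sum_{|\beta|=\r_V+1}\frac{C(\beta,U)}{\beta!}D^\beta f(t)$, which is the required identity.

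Inserting this into the first display completes the proof. All manipulations involve only finite sums, so no convergence issue arises, and the one point deserving care is the coefficient extraction in Taylor's formula: one must check that $D^\beta Q_U$ reproduces precisely the $y^\beta$-coefficient of $Q_U$, which holds because $D^\beta y^\gamma=0$ whenever $|\gamma|=|\beta|$ but $\gamma\neq\beta$, while $D^\beta y^\beta=\beta!$. I expect this to be the main (and only mildly delicate) step; the remaining algebra is bookkeeping, and the passage from the symbol polynomial $Q_U$ in $y$ to the operator $Q_U(\nabla)$ is just the definition of a constant-coefficient differential operator.
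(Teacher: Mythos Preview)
Your proof is correct and follows essentially the same route as the paper: both reduce the claim to the identity $\sum_{|\beta|=\r_V+1}\frac{C(\beta,U)}{\beta!}\,D^\beta f=D_U f$ (the paper's equation (\ref{13})), which you establish cleanly via the Taylor expansion of the symbol $Q_U$. The only cosmetic difference is that you invoke Theorem~\ref{bernoul} directly, while the paper re-enters through the Fourier series of Lemmas 2.1 and 2.5 and formula (\ref{12}); your route is the more economical one, and your symbol argument fills in the ``tedious calculation'' the paper leaves to the reader.
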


\begin{proof}
 From Lemma 2.1 and 2.5 we get
\[
\sum_{|\beta|=\r_V+1} L_\beta(x) { D^\beta f(t)\over \beta!}=
\]
\[
\sum_{|\beta|=\r_V+1} \left({1\over 2\pi i}\right)^{\r_V+1} \san
D^\beta \widehat{B_V}(\alpha) e^{2\pi i\alpha\cdot x} { D^\beta
f(t)\over \beta!}=
\]
\[
\left({1\over 2\pi i}\right)^{\r_V+1} \sum_{|\beta|=\r_V+1}
\sum_{U\in \Lambda} \quad \sum_{\alpha\neq 0, \alpha\bot
(V\setminus U) } D^\beta \widehat{B_V}(\alpha) e^{2\pi
i\alpha\cdot x} { D^\beta f(t)\over \beta!}.
\]
Note that the sets $V\setminus U$, where $U\in\Lambda$ are
disjoint. Tedious calculation shows that
 \begin{equation}\label{13}
\sum_{|\beta|=\r_V+1} D^\beta (\prod_{v\in U_\alpha}(x\cdot
v))\Big|_{x=\alpha} {D^\beta f(t)\over \beta!}= D_{U_\alpha} f(t).
\end{equation}
Consequently using (\ref{12}), (\ref{14}) and (\ref{13}) we get
the theorem.
\end{proof}

\begin{remarks}  Let $V$ be unimodular. Then
 the functions $B(V,U)$, $U\in \Lambda$ are orthogonal in $L^2([0,1]^d)$.
Moreover since all norms in a finite dimension space are equivalent
$"\asymp "$ we get
\[
\int_{R^d} \Big( \int_{[0,1]^d} \Big|  \sum_{|\beta|=\r_V+1}
{1\over \beta!} D^\beta f(t) L_\beta(x) \Big|^p \, dx \Big) dt
=
\]
\[
= \int_{R^d} \Big( \int_{[0,1]^d} \Big| \sum_{U\in \Lambda} D_U
f(t) B(V,U)(x)  \Big|^p \, dx \Big) dt \asymp
\]
\[
\sum_{U\in \Lambda}  \int_{\R^d} |D_U f(t)|^p dt  \int_{[0,1]^d}
|B(V,U)(x)|^p  dx.
\]
For $p=2$ we have equality and we obtain the Theorem 2.2
\cite{BD3}, com. \cite{DU}.

\end{remarks}
\begin{remarks}
 Note also that for all $U\in \Lambda$ there is a vector
$\alpha_U \in \Z^d\setminus \{0\}$ such that
\[
\{\alpha\in \Z^d\setminus \{0\}: \alpha\bot (V\setminus U) \}
=
\{ k\alpha_U: k\in \Z\setminus \{0\} \}.
\]
Thus \[ B(V,U)(x)=B^{\r_V+1}(\alpha_U\cdot x) \prod_{v\in U}
{1\over \alpha_U\cdot v},\] where $B^k$ is Bernoulli polynomial
\[
B^k(t)=\sum_{n\in \Z\setminus \{0\}} {e^{2\pi i nt}\over (2 \pi i
n)^k}.\] Consequently changing the variable we get
\[
 \int_{[0,1]^d}|B(V,U)(x)|^p  dx
 =\int_{[0,1]^d} |B^{\r_V+1}(\alpha_U\cdot x)|^p dx
 \left(\prod_{v\in U}{1\over \alpha_U\cdot v} \right)^p
 \]\[
 =\int_0^1 |B^{\r_V+1}(t)|^p dt
 \left(\prod_{v\in U}{1\over \alpha_U\cdot v} \right)^p.\]

\end{remarks}

\section{Boschariev Theorem}

In 1969 S.V. Boschkariev proved the asymptotic formula of the
coefficients of Haar expansion for the functions of bounded
variation in $<0,1>$. In fact he proved that if $f$ is absolutely
continuous and $a_k(f)$ are the coefficients of Haar expansion
then
\[
\lim_{n\to \infty} \sqrt{2^n}
\sum_{k=1}^{2^n}|a_{2^n+k}(f)|={1\over 4} \vee_0^1 f.
\]
The approximation of the functions with bounded variation are now
attracted many mathematicians \cite{M}, \cite{W}, \cite{CDDD}.
Form our point of view we are interested of asymptotic formula
between picture $f$ and it's digital image $Pf$ in $L^1$ norm on
$\R^d$, $d>1$ where $P$ is orthogonal projection corresponding to
 a box spline
\[
B_V(x)=\chi_{[0,1]^d},
\]
where by $\chi_A$ we denote the characteristic function of the set
$A$.  From Theorem 2.2. and 2.7 we know that for $f\in W_1^1(\R^d)$
 \begin{eqnarray}\label{3.15} \lefteqn{
\lim_{h\to 0^+} \int_{\R^d} \left|{f-P_hf\over h}\right|}\\
\nonumber &=&\int_{\R^d} dt \int_{[0,1)^d}dx \left|{\partial f
(t)\over
\partial x_1} B^1(x_1)+\cdots +{\partial f (t)\over
\partial x_d} B^1(x_d) \right|\asymp |f|_{1,1},
\end{eqnarray}
and
\[
 |f|_{1,1}=  \sum_{j=1}^k \|{\partial f \over
\partial x_j}\|_1\asymp
 \int_{\R^d}|D f|,
 \]
 $Df=({\partial f \over \partial x_1},\ldots,{\partial f \over \partial
 x_d})$.
 Consequently, by asymptotic formula we get the semi-norm of $W_1^1(\R^d)$.
The challenge is to obtain similar result for $BV(\R^d)$. 
 
 We will consider the asymptotic for $f=\chi_E$, the characteristic functions of the bounded
 open set $E$ with Lipschitz boundary.
 Let us  recall the notation.
A function $u\in L^1(\R^d)$ whose partial derivatives in the sense
of distributions are measures (Radon signed  measures) with finite
variation is called a {\em function with bounded variation} i.e.
\[
Du=(\mu_1,\mu_2,\ldots,\mu_d)
\]
and
\[
 |\mu(\R^d)|<\infty, \quad i=1,\ldots,d.
\]
The class of all such functions will be denoted by $BV(\R^d)$. For
details see E. Giusti also W. P. Ziemmer, \cite{G}, \cite{Z}). If
$u\in BV(\R^d)$ the total variation $\|Du\|$ may be regarded as a
measure, if  $g\geq 0$ and $g$ is continuous then
\[
 \begin{aligned}\label{15} 
\|Du\|(g)=\sup\Big\{ \int_{\R^d} (Div \Phi)u:&
\quad \Phi=(\phi_1,\ldots,\phi_d)\in C^1_0(\R^d,\R^d),\\&
|\Phi(x)|=\sqrt{\phi_1(x)^2 +\cdots+\phi_d(x)^2}\leq g(x). \Big \}
\end{aligned}
\]
 {\em Caccioppoli set} in $\R^d$ which are known also as {\em set E
of finite perimeter} are defined by $\chi_E\in BV(\R^d)$. If
$D\chi_E=(\mu_1,\mu_2,\ldots,\mu_d)$ note that
\[
\mu_i<<\|D\chi_E\|
\]
then there is Radon-Nikodym derivative of $D\chi_E$ with respect
to $\|D\chi_E\|$ i.e.
\[
\nu(x)=\nu(x,E)=-{dD\chi_E \over d\|D\chi_E\|},
\]
$\nu(x)$ is called {\em generalized exterior normal} to $E$ at
$x$. We need also the notation of {\em reduced boundary} of $E$:
$x\in
\partial^* E$ if

1) $\|D\chi_E\|(B(x,r))>0$ for all ball with arbitrary radius
$r>0$

2) if \[ \nu_r(x)=-{D\chi_E(B(x,r))\over \|D\chi_E\|(B(x,r))},
\]
then the limit $\nu(x)=\lim_{r\to 0} \nu_r(x)$ exists with
$|\nu(x)|=1$.
It is also known that for every Borel set $B\subset \partial^*E$, we have $\|D\chi_E(B)\|(B)=H^{d-1}(B)$,where
$H^{d-1}$ is $(d-1)$-dimensional Hausdorff measure.

Note that it makes sense to consider modified right side of
formula of (\ref{3.15}) and from Remark 2.8 and 2.9  it is
equivalent to $|f|_{BV}$ i.e.
\begin{equation}\label{3.16}
\int_{\partial^* E} H^{d-1}(dt) \int_{[0,1)^d}dx
\left|\nu(t)\cdot B(x) \right| \asymp H^{d-1}(\partial^* E)=|f|_{BV},
\end{equation}
where $B(x)=(B^1(x_1),\ldots,B^1(x_d))$.
 Unfortunately the limit of (\ref{3.15}) for $f=\chi_E$ may
not exist, see example below. Consequently we introduce
Marcinkiewicz average of the operators $P_h$ \cite{BD0}, which was
used to prove equivalent norm in Hardy spaces. We restricted our
considerations on $h=2^{-n}$ i.e. we will consider the
averaging-projections $P_{2^{-n}}$ with respect to dyadic cubes of
side length $2^{-n}$. To pose correctly problems let us introduce
for $\tau\in R^d$
\[
(P^\tau)_{2^{-n}}f(x)=P_{2^{-n}}f(\cdot-\tau)(x+\tau).
\]
We start with a simply motivation.  Let $H^1$ be $1$-dimensional Hausdorff
 measure.

\begin{examples}
Let $A=[a_1,b_1]\times  [a_2,b_2]$. Then for $f=\chi_A$ and a.e.
$\tau \in \R^2$ and $0\leq \theta \leq 1/4$ there is a sequence
$\{n_k\}$ such that
\[
\lim_{n_k\to \infty} \int_{R^2} 2^{n_k}\left|f-(P^\tau)_{2^{-n_k}}
f\right|=2\theta H^{1}(\partial A)=2\theta |f|_{BV},
\]
\end{examples}
The easy proof is left to reader.
\begin{corollary} If $f=\sum_{j=1}^n d_j \chi_{A_j}$ where $d_j\in \R$ and
$A_j$ are rectangles then for a.e. $\tau \in \R^2$
\[
\lim_{n\to \infty} \sup\int_{R^2} 2^{n}\left|f-(P^\tau)_{2^{-n}}
f\right|={1\over 2} |f|_{BV}.
\]
\end{corollary}
Let $K(x,2^k)$ denote  cube in center at $x$ of  the side length
$2^k$. By simple calculation we get:
\begin{lemma} Let $E\subset \R^d$ be measurable bounded set. Then 
\begin{equation}\label{3.17}
\int_{\R^d} 2^{n}\left|\chi_E-(P^\tau)_{2^{-n}} \chi_E\right|= 2\sum_{K\in
Q_n} (2^{-n})^{d-1}  {|K\cap(E+\tau)|\over |K|}
\left(1-{|K\cap(E+\tau)|\over |K|}\right),\end{equation} where
$Q_n$ is a collection of all dyadic cubes of side length $2^{-n}$.
\begin{equation}\label{3.18}
\int_{[0,1]^d}\int_{\R^d} 2^{n}\left|\chi_E-(P^\tau)_{2^{-n}}
\chi_E\right|=2\int_{\R^d}2^n M_n(x) dx,
\end{equation}  where
\[
M_n(\tau)={|K(\tau,2^{-n})\cap E|\over |K|}
\left(1-{|K(\tau,2^{-n})\cap E|\over |K|}\right)
\]
\end{lemma}

For given vector $v\in \R^d\setminus \{0\}$ let us define
\[
F(v)=\int_{[0,1]^d} G_v(u) du.
\]
To define function $G_v$ we need for $u\in \R^d$
\[
\Pi_v(u)=\{y\in \R^d: (y-u)\cdot v=0\},
\]
\[
\Pi^+_v(u)=\{y\in \R^d: (y-u)\cdot v>0\},
\]
\[
\Pi^-_v(u)=\{y\in \R^d: (y-u)\cdot v<0\}.
\]
Thus
\[
G_v(u)={|\Pi_v^+(u)\cap[0,1]^d||\Pi_v^-(u)\cap[0,1]^d|\over
H^{d-1}(\Pi_v(u)\cap[0,1]^d)}.
\]

\begin{theorem} Let $E$ be open bounded with Lipschitz boundary.
Then
\[
\lim_{n\to \infty} \int_{[0,1^d]}d\tau\int_{\R^d}
2^{n}\left|\chi_E-(P^\tau)_{2^{-n}} \chi_E\right|=\int_{\partial^* E}
F(\nu(x))H^{d-1}(dx),
\]
\end{theorem}

We postpone the proof of the theorem. Now we  want to present the
main ideas. We consider $d=2$. In the ergodic theory [\cite{CFS}
pp. 69] it is known that for any continuous function $f$ on
$2$-dimensional torus $Tor^2$ with Lebesgue's measure $dt$
\begin{equation}\label{3.19}
\lim_{t\to \infty} {1\over 2t} \int_{-t}^t f(T_\alpha^s x)
ds=\int_{Tor^2} f(t) dt
\end{equation}
uniformly for all $x\in Tor^2$, where $T_\alpha^s$ is a
one-parameter group of translations (line $l=l(\alpha,x)$)
\begin{equation}\label{3.20}
T_\alpha^s x=(x_1+\cos(\alpha)s)mod 1,(x_2+\sin(\alpha) s)mod 1)
\end{equation}
 and $\cos(\alpha),\sin(\alpha)$ are rationally independent i.e.
 $\tan(\alpha)$ is a irrational number. Note that $\alpha$ is an angle
 between that line $l$ and $OX$. Let
 \[
 \nu(\alpha)=[-\sin\alpha,\cos\alpha].
 \]
 Then $\nu(\alpha)\perp l$.
Note also that  $\tan(\alpha)$ is a irrational number  iff for all
$x$ the line $t \to T_\alpha^t x$ is dense in $Tor^2$ iff there is
$x$  that the line $t \to T_\alpha^t x$ is dense in $Tor^2$.

 We will calculate the asymptotic formula for
polygons $E$ whose sides $l_1(\alpha_1), l_2(\alpha_2),\ldots,
l_k(\alpha_k)$ are not parallel to axis and each line determined
by $l_j$ is dense in $Tor^2$. It is crucial  that the above
theorem (\ref{3.19}) is true also for the bounded functions
$G_{\nu(\alpha_j}$ on $Tor^2=[0,1)^2$ and continuous on $(0,1)^2$.

The function $F(\nu(\alpha))$ has following properties:
$F(\nu(\alpha))=F(\nu(\alpha+\pi/2))$ and if $\pi/4\leq \alpha\leq
\pi/2$ then $F(\nu(\alpha))=F(\nu(\pi/2-\alpha))$.

 Let us present the function $F$ for $\alpha \in
[\pi/4,\pi/2]$.

 \vskip 1cm
\begin{figure}[tbh]
\begin{center}
\includegraphics[width=2.5cm,height=3cm, trim=100 350 400 300]{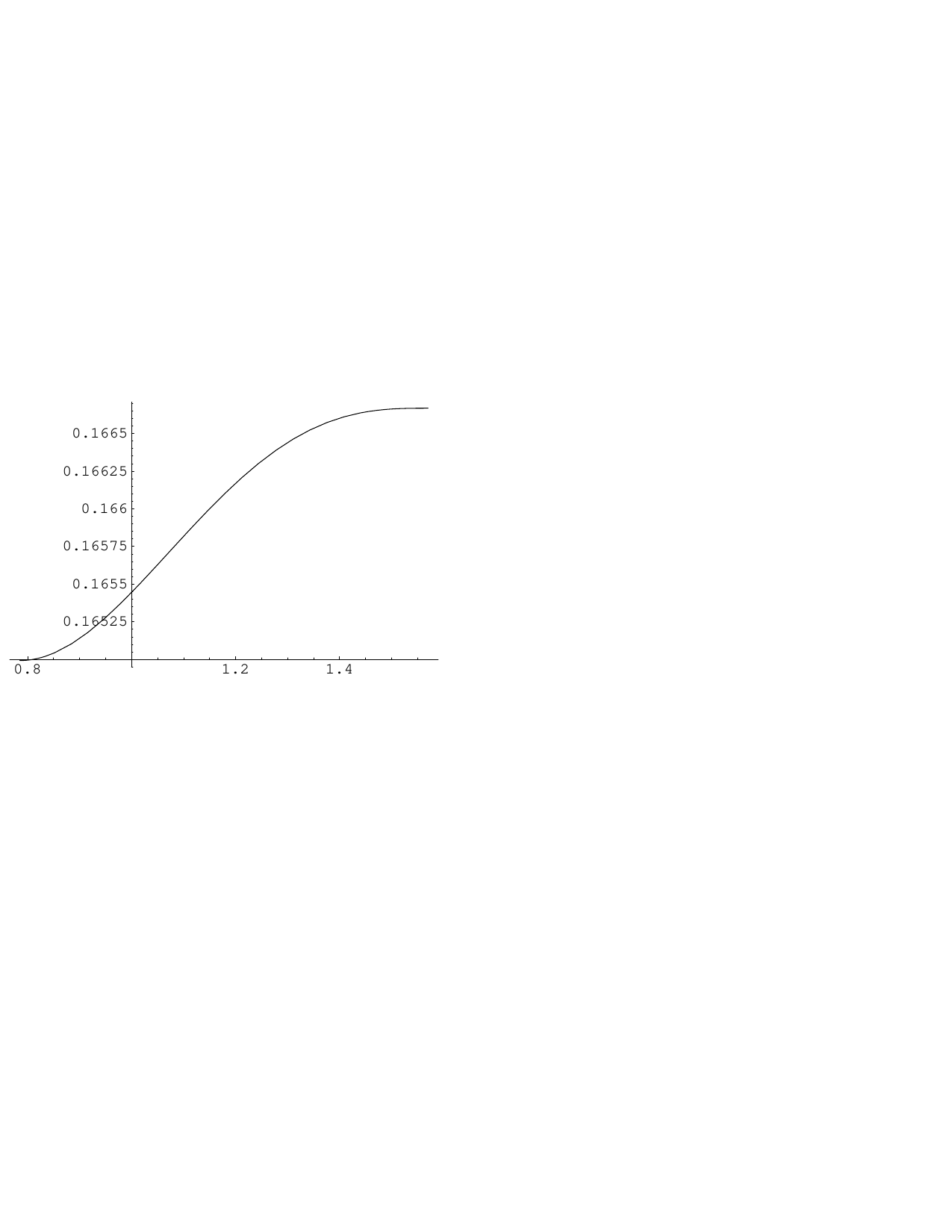}
\end{center}
\caption{}
\label{sineplot}
\end{figure}

 \vskip 0.5cm

\begin{theorem}
Let  $f=\chi_E$ where $E$ is a polygon of sides $l_1(\alpha_1),
l_2(\alpha_2),\ldots, l_k(\alpha_k)$ not parallel to axis and
$\sin\alpha_j$ and $\cos\alpha_j$ are rationally independent.
Uniformly for all $\tau \in \R^2$ we have
\[
\lim_{n\to \infty} \int_{\R^2} 2^{n}\left|f-(P_\tau)_{2^{-n}}
f\right|=2\sum_{j=1}^k |l_j| F(\nu(\alpha_j)).
\]
Hence
\[
\lim_{n\to \infty} \int_{[0,1]^2}d\tau \int_{\R^2}
2^{n}\left|f-(P_\tau)_{2^{-n}} f\right|=2\sum_{j=1}^k |l_j|
 F(\nu(\alpha_j)).
\]
\end{theorem}

{\sc Proof.} We need to calculate the limit of (\ref{3.17}),
(\ref{3.18}). It is immediate consequence of (\ref{3.19}).

\Koniec

It is interesting to compare the limit obtained in Theorem 3.5
with the left formula in (\ref{3.16}). Taking to account that $D
f=\sum_{j=1}^k(-\sin\alpha_j,\cos\alpha_j)d\mu_j$, where $\mu_j$
is a Lebesgue measure on $l_j$ we get
 \[
\int_{R^2}
d\mu(t) \int_{[0,1)^2}dx \left|\phi_1(t)  B^1(x_1)+ \phi_2(t) B^1(x_2)
\right|
\]
\[=\sum_{j=1}^k |l_j|
\int_{[0,1)^2}dx \left|-\sin\alpha_j B^1(x_1)+ \cos\alpha_j B^1(x_2)
\right|
\]
Let us compare
\[
 {
 \int_{[0,1)^2}dx \left|-\sin\alpha_j B^1(x_1)+ \cos\alpha_j B^1(x_2)\right|
 \over
 2F(\nu(\alpha_j))
 }
 \]
 for $\pi/4<\alpha_j< \pi/2. $

\vskip 1cm
\begin{figure}[tbh]
\begin{center}
\includegraphics[width=2.5cm,height=3cm,trim=100 350 400 300]{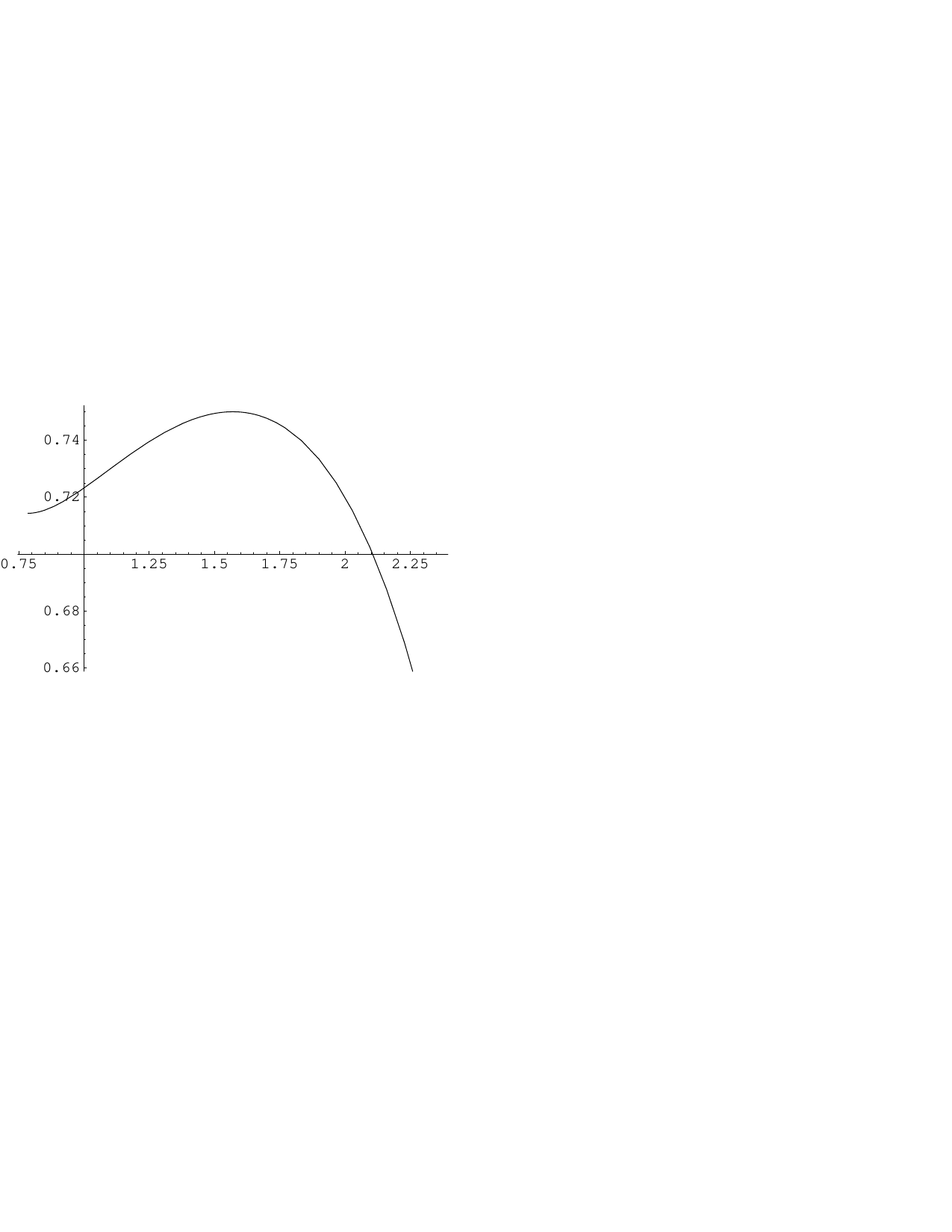}
\end{center}
\caption{}
\label{sineplot}
\end{figure}
\vskip 0.5cm

\begin{proof}[Proof of Theorem 3.4] Since $\partial E$ is Lipschitz we can
assume that for each $x\in\partial E$ there is local cartesian
coordinate such that

 there is a neighborhood  of $x$ $U_r$ and a Lipschitz function $\phi$ with a constant $L$
  such that
\[
U_r=B_r \times(-a,a),
\]
where $B_r$ is a ball  $B_r=\{x^*\in R^{d-1}, |x^*|<r \}$ and
$d>0$,
\[
\partial E \cap U_r=\{(x^*,\phi(x^*): x^*\in B_r\},
\]
\[
 E \cap U_r=\{x_d<\phi(x^*): x^*\in B_r, x_d\in(-a,a)\}.
\]
 To prove the theorem it is
sufficient to show that for any  $s<r$
\[
\lim_{n\to \infty} \int_{D\times (-a,a)}2^n M_n(x)
dx=\int_{\partial^* E\cap U} F(\nu(x))H^{d-1}(dx),
\]
where $D=B_s$,  $U=D\times (-a,a)$. Since $\phi$ is a Lipschitz
function for sufficient small $n$ (dependent on $s$)
\[
 \int_{D\times (-a,a)}2^n M_n(x)
dx=\int_D dx^*
\int_{\phi(x^*)-(L+1)2^{-n}}^{\phi(x^*)+(L+1)2^{-n}} 2^n
M_n((x^*,t)) dt
\]
Changing variables $t=\phi(x^*)+\tau2^{-n}$ we get
\[
=
\int_D dx^* \int_{-(L+1)}^{(L+1)} M_n((x^*,\phi(x^*+\tau2^{-n}))
d\tau
\]
From Rademacher theorem $\phi$ is differentiable at almost all
points. Let us denote a set of these points by $D^*\subset D$. Let
us fix $x^*\in D^*$. Note that $x=(x^*,\phi(x^*))\in \partial^*
E$. Let $K(x,2^k)$ denote  cube in center at $x$ of  the side
length $2^k$. Let
\[
T_{n,x^*}=\{t: K((x^*,t),2^{-n}) \cap \Pi_{\nu(x)}(x)\neq
\emptyset\}
\]
It is geometrically obvious that for bijection
$H(t)=(t-\phi(x^*))2^n$ there is $\delta=\delta(x^*)>0$ such that
$H(T_{n,x^*})=[-\delta,\delta]$. Moreover for all $\tau\in
[-\delta,\delta]$ and $i=+$ or $i=-$
\[
{|K((x^*,\phi(x^*)+\tau2^{-n}),2^{-n})\cap
\Pi^i_{\nu(x)}(x)|\over|K((x^*,\phi(x^*)+\tau2^{-n}),2^{-n})|}
\]
\[
=|K((x^*,\phi(x^*)+\tau),1)\cap\Pi^i_{\nu(x)}(x)|=:h^i_{x^*}(\tau).
\]
Note that
 \begin{eqnarray*} \lefteqn{
{|K((x^*,\phi(x^*)+\tau2^{-n}),2^{-n})\cap \Pi^i_{\nu(x)}(x)|\over
2^{-dn}} }\\ &=& {|K((x^*,\phi(x^*)+\tau2^{-n}),2^{-n})\cap
\Pi^i_{\nu(x)}(x)\cap E|\over 2^{-dn}}\\ &+&
{|K((x^*,\phi(x^*)+\tau2^{-n}),2^{-n})\cap \Pi^i_{\nu(x)}(x)\cap
(R^d \setminus E) |\over 2^{-dn}}.
\end{eqnarray*}
 From Theorem 5.6.5 \cite{Z} we infer that
\[
\lim_{n\to \infty} {|K((x^*,\phi(x^*)+\tau2^{-n}),2^{-n})\cap
\Pi^-_{\nu(x)}(x)\cap E
|\over|K((x^*,\phi(x^*)+\tau2^{-n}),2^{-n})|}=h^-_{x^*}(\tau).
\]
and
\[\lim_{n\to \infty} {|K((x^*,\phi(x^*)+\tau2^{-n}),2^{-n})\cap
\Pi^+_{\nu(x)}(x)\cap E
|\over|K((x^*,\phi(x^*)+\tau2^{-n}),2^{-n})|}=0.
\]
Consequently for all $\tau\in [-\delta,\delta]$
\[
\lim_{n\to \infty}
M_n(x^*,\phi(x^*+\tau2^{-n}))=h^-_{x^*}(\tau)h^+_{x^*}(\tau).
\]
and $\tau\not\in [-\delta,\delta]$
\[
\lim_{n\to \infty} M_n(x^*,\phi(x^*+\tau2^{-n}))=0.
\]

 From Lebesgue bounded convergence theorem
\[
\lim_{n\to\infty} \int_{-(L+1)}^{(L+1)} 2^n
M_n((x^*,\phi(x^*+\tau2^{-n})) d\tau= \int_{-\delta}^\delta
h^-_{x^*}(\tau)h^+_{x^*}(\tau)
\]
Making use of the last relation, we obtain
\[
\int_D dx^* \int_{-(L+1)}^{(L+1)} 2^n
M_n((x^*,\phi(x^*+\tau2^{-n})) d\tau=
\]
\[
=\int_{D^*} dx^* \int_{-\delta(x^*)}^{\delta(x^*)}
h^-_{x^*}(\tau)h^+_{x^*}(\tau) d\tau
\]
\[
=\int_{D^*} dx^*  \sqrt{1+|D\phi(x)|^2}{1\over
\sqrt{1+|D\phi(x)|^2}} \int_{-\delta(x^*)}^{\delta(x^*)}
h^-_{x^*}(\tau)h^+_{x^*}(\tau) d\tau
\]
\[
=\int_{D^*}\sqrt{1+|D\phi(x)|^2}F(\nu(x^*,\phi(x^*))) dx^*
\]
\[
=\int_{\partial^* E} F(\nu(x))H^{d-1}(dx).
\]
\end{proof}
By Theorem 5.7.3 \cite{Z} we get following corollary
\begin{corollary}
Let $E$ be a set of finite perimeter. Then
\[
\liminf_{n\to \infty} \int_{[0,1^d]}d\tau\int_{\R^d}
2^{n}\left|\chi_E-(P^\tau)_{2^{-n}} \chi_E\right|\geq 
2\int_{\partial^* E} F(\nu(x))H^{d-1}(dx).
\]
\end{corollary}
It seems to be true that
\[
\liminf_{n\to \infty} \int_{[0,1^d]}d\tau\int_{\R^d}
2^{n}\left|\chi_E-(P^\tau)_{2^{-n}} \chi_E\right|\asymp
H^{d-1}(\partial^* E).
\]

\bibliographystyle{amsplain}

\end{document}